\newtheorem{theorem}{Theorem}[section]
\newtheorem{lemma}[theorem]{Lemma}
\newtheorem{proposition}[theorem]{Proposition}
\theoremstyle{definition}
\theoremstyle{remark}
\newtheorem{remark}[theorem]{Remark}
\numberwithin{equation}{section}
\title[Bivariate NEFs with quadratic diagonal of VF]{Bivariate natural exponential families with quadratic diagonal of the variance function}
\author[J.Matysiak]{Joanna Matysiak}
\address{Wydzia{\l} Matematyki i Nauk Informacyjnych\\
Politechnika Warszawska\\
Koszykowa 75, 00-662 Warsaw, Poland}
\email{J.Matysiak@mini.pw.edu.pl}
\keywords{Laplace transform, probability measures, variance functions, regression conditions for probability measures.}
\subjclass[2000]{Primary: 44A10; Secondary: 60E10}
\date{}
\begin{document}
\begin{abstract}
We characterize bivariate natural exponential families having the diagonal of the variance function of the form
\[
\textrm{diag} V(m_1,m_2)=\left(Am_1^2+am_1+bm_2+e,Am_2^2+cm_1+dm_2+f\right),
\]
with $A<0$ and $a,\ldots,f\in\mathbb{R}$. The solution of the problem relies on finding the conditions under which a specific parametric family of functions consists of Laplace transforms of some probability measures.
\end{abstract}
\maketitle

\section{Introduction}\label{s:intro}

 Let us recall some basic concepts concerning natural exponential families (NEFs for short). For a positive measure $\mu$ on $\mathbb{R}^n$ we define its Laplace transform by $$L_{\mu}(\theta)=\int_{\mathbb{R}^n}\exp\left<\theta,x\right>\mu({dx}).$$ Let $\Theta(\mu)=\textrm{Int} \{\theta \in \mathbb{R}^n :L_{\mu}(\theta)<\infty\}$ and let $k_{\mu}=\log L_{\mu}$ denote the cumulant function of $\mu$.
Set $\mathcal{M}$ as the set of such measures $\mu$ that $\Theta(\mu) \ne \emptyset$  and $\mu$ is not concentrated on an affine hyperplane of $\mathbb{R}^n$. For $\mu \in \mathcal{M}$, the family of probabilities
 \[
F(\mu)=\{P(\theta,\mu)(dx)=\exp\left(<\theta,x>-k_{\mu}(\theta)\right)\mu(dx):\theta \in \Theta\mathbb({\mu)}\}
\]
is called \emph{the natural exponential family generated by} $\mu$, see \cite{bib:3}. The image $M_{F}$ of $\Theta(\mu)$ by diffeomorphism $k'_{\mu}$ is called \emph{the domain of means of} $\mu$. \emph{The variance function} (VF) of the NEF is defined by $V_{F}(\mathbf{m})=k''_{\mu}(\Psi_{\mu}(\mathbf{m}))$, where $\mathbf{m} \in M_{F}$ and $\Psi_{\mu}=(k_\mu^\prime)^{-1}$. In what follows we will be frequently omitting the subscript $F$ in the notation $V_F$ for the variance function.

The principal significance of the mapping $\mathbf{m} \mapsto V_{F}(\mathbf{m})$  is that, together with its domain $M_{F}$, it characterizes NEF uniquely. It makes it possible to describe NEFs assuming a concrete form of the variance function. There has been a lot of interest in such questions. For example, Letac \cite{bib:14} characterizes all NEFs  with $V_{F}(\mathbf{m})=B\mathbf{m}+C$, where $B$ is a linear operator mapping $\mathbb{R}^n$ into $\mathbb{S}_{n}$ (the space of $n\times n$ symmetric real matrices), and $C \in \mathbb{S}_{n}$. Casalis \cite{bib:7}, \cite{bib:8} gives a generalization of this result by considering $V_{F}(\mathbf{m})=a\mathbf{m}\otimes\mathbf{m}+B\mathbf{m}+C$.
Her result was further extended by Hassairi and Zarai \cite{bib:15} to NEFs with cubic variance functions.

For another standpoint, we recall some papers that provide characterizations based on a fragmentary knowledge of the variance function. Kokendji and Seshadri \cite{bib:11} start with $\det V(\mathbf{m})=const$ and thus identify the Gaussian law in $\mathbb{R}^{n}$. In \cite{bib:2} Letac and Weso{\l}owski characterized NEFs with $V(\mathbf{m})=p^{-1}\mathbf{m}\otimes \mathbf{m}-\phi(\mathbf{m})M_{\nu}$, where $M_{\nu}$ is a symmetric matrix associated with a quadratic form $\nu$, $\mathbf{m} \rightarrow \phi(\mathbf{m})$ an unknown real function, and $p$ is a number.

One of the most important papers that base on a partial knowledge of $V_{F}$ is \cite{bib:1}, in which the authors considered the diagonal family of NEFs in $\mathbb{R}^{n}$ such that $$\textrm{diag} V(\mathbf{m})=(f_1(m_1),\ldots,f_n(m_n))$$ ($\textrm{diag}V$ stands for the diagonal elements of the variance matrix V, and $f_i$ is an arbitrary function of $i$-th coordinate of $\mathbf{m}$). They gave the full characterization of the family (showing also that  $f_{i}$, $i=1,2, \ldots, n$ must be some polynomials of degree at most 2).

A natural question is the following: let us fix $n$ classes of functions $\mathcal{F}_{i}$ of the variables $m=(m_1,m_2,\ldots,m_{n})$ for $i=1,\ldots,n$. For instance $\mathcal{F}_i$ could be the functions of the form $f_i(m)=Am_i^2+B(m)+c$, where $B$ is a linear form and $A,c \in \mathbb{R}$.
What happens if we stay focused on the diagonal $\left(f_1(m),\ldots,f_{n}(m)\right)$ of the VF such that $f_i \in \mathcal{F}_{i}$? We are going to study this question for $n=2$.
For a partial result in this direction, see \cite{bib:16}.

The paper is organized as follows. Its main results, Theorem \ref{glow} and Theorem \ref{main}, are formulated in Section \ref{s:main}. Sections \ref{s:proofmain} and \ref{s:proofglow} provide proofs of the theorems, along with some auxiliary facts. The straightforward, but lengthy proof of one of the facts, Proposition \ref{lemat}, is presented in Appendix A.

\section{Main Results}\label{s:main}
Our task in this paper is to characterize NEFs with the VF of the form
\begin{equation} \label{diag}
\textrm{diag} V(m_1,m_2)=\left(Am_1^2+am_1+bm_2+e,Am_2^2+cm_1+dm_2+f\right),
\end{equation}
with $A<0$ and $a,\ldots,f\in\mathbb{R}$. To avoid considering the VFs of the diagonal NEFs analyzed by Bar-Lev et al. in \cite{bib:1}, we need to exclude the case $b=c=0$. Due to the symmetry between $b$ and $c$, in the sequel we shall assume without loss of generality that $b\ne0$.

We decided to stick here to $A<0$ only and to place our findings on the case $A>0$ in a separate future paper. Our decision is motivated by a rather surprising lack of symmetry between the two cases. It turns out that if $A>0$ then \eqref{diag} implies a much wider and less homogeneous class of parametric bivariate functions that are candidates for the Laplace transforms of the corresponding NEFs, than for $A<0$.

\begin{remark}
An interesting feature of the problem  is that it can be formulated equivalently using regression properties. Assume that $\mathbf{X}=(X_1,X_2)$ and $\mathbf{Y}=(Y_1,Y_2)$ are  independent identically distributed random vectors  such that their Laplace transform exists in a set with a nonempty interior.  The regression formulation of the problem we solve in this paper is to identify the distributions of $\mathbf{X}$ if
\begin{eqnarray*}
\mathbb{E}\left((X_1-Y_1)^2-2AX_1Y_1|\mathbf{X}+\mathbf{Y}\right) &=&a(X_1+Y_1)+b(X_2+Y_2)+2e,\\
\mathbb{E}\left((X_2-Y_2)^2-2AX_2Y_2|\mathbf{X}+\mathbf{Y}\right) &=&c(X_1+Y_1)+d(X_2+Y_2)+2f.
\end{eqnarray*}
For other works in a similar vein, see e.g.\cite{bib:1}, \cite{bib:4}, \cite{bib:5}, \cite{bib:13}, \cite{bib:2}.
\end{remark}

Now we state the main theorem, that specifies the form of a Laplace transform corresponding to the measure generating NEF with the VF given in \eqref{diag}.
\begin{theorem}\label{glow}
Let $\mu$ be a probability measure (not concentrated in a point) generating NEF with VF given by \eqref{diag}.
Then the Laplace transform of $\mu$ is of the form
\begin{equation}\label{ttww}
L(\theta_1,\theta_2)=\left(\sum_{i=1}^{n_r}\alpha_i\exp\left(\lambda_i\theta_1+\frac{\lambda_i^2-a\lambda_i-eA}{b}\theta_2\right)\right)^{-\frac{1}{A}},\;\;
(\theta_1,\theta_2) \in \Theta(\mu),
\end{equation}
where $\{\lambda_1,\ldots,\lambda_{n_r}\}$ is the set of distinct real roots of
\begin{multline}\label{lameq}
\lambda^4-2a\lambda^3+\left(2Ae+a^2-db\right)\lambda^2-\left(2Aae-adb+cb^2\right)\lambda+\\A^2e^2-edbA+fb^2A=0
\end{multline}
(the coefficients in \eqref{lameq} come from \eqref{diag}). Furthermore, $n_r \geq 2 $ and
either
\begin{itemize}
\item  $\alpha_i \geq 0$, $i=1,\ldots,n_r$, $\sum_{i=1}^{n_r}\alpha_i=1$, $-1/A \in \mathbb{N}$ and $\Theta(\mu) = \mathbb{R}^2$, or
\item  $\alpha_i \leq 0$, $i=1,\dots,n_r$, $\sum_{i=1}^{n_r}\alpha_i=-1$, $-1/A \in 2\mathbb{N}$ and $\Theta(\mu)=\mathbb{R}^2$.
\end{itemize}
\end{theorem}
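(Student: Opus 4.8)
The plan is to translate the hypothesis \eqref{diag} on the variance function into a partial differential equation for a suitable power of $L$, to solve that equation, and then to keep only those solutions that can be Laplace transforms of probability measures.

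First I would pass from $V$ to the cumulant function. Writing $k=\log L$, one has $\mathbf m=k'(\theta)$ and $V(\mathbf m)=k''(\theta)$ for $\theta\in\Theta(\mu)$, so \eqref{diag} is equivalent to the two scalar identities
\[
k_{\theta_1\theta_1}=Ak_{\theta_1}^{2}+ak_{\theta_1}+bk_{\theta_2}+e,\qquad
k_{\theta_2\theta_2}=Ak_{\theta_2}^{2}+ck_{\theta_1}+dk_{\theta_2}+f
\]
holding on $\Theta(\mu)$. Substituting $k=-\tfrac1A\log\phi$, equivalently $\phi=L^{-A}$ (legitimate because $L>0$ on $\Theta(\mu)$ and $A<0$), the quadratic terms cancel and the system becomes the \emph{linear}, constant--coefficient system
\[
\phi_{\theta_1\theta_1}-a\phi_{\theta_1}-b\phi_{\theta_2}+Ae\,\phi=0,\qquad
\phi_{\theta_2\theta_2}-c\phi_{\theta_1}-d\phi_{\theta_2}+Af\,\phi=0 .
\]

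Next I would solve this system. Since $b\ne0$, the first equation expresses $\phi_{\theta_2}$, and hence by iteration every $\theta_2$--derivative of $\phi$, as a differential operator in $\theta_1$ alone applied to $\phi$; substituting these into the second equation yields, for each fixed $\theta_2$, a fourth--order linear constant--coefficient ODE in $\theta_1$ whose characteristic polynomial is exactly the quartic in \eqref{lameq}. As $L$ is real--analytic and positive on the open set $\Theta(\mu)$, so is $\phi$; hence $\phi$ is an exponential polynomial whose $\theta_1$--frequencies are roots of \eqref{lameq}. Feeding such a $\phi$ back into the first equation shows that the $\theta_2$--frequency accompanying a root $\lambda$ must be $(\lambda^{2}-a\lambda-eA)/b$, and that the coefficient of each mode is a constant (times the corresponding exponential in $\theta_2$) unless $\lambda$ is a repeated root, in which case polynomial factors in $\theta_1,\theta_2$ may a priori appear. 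In all, $\phi$ has at most four distinct exponential modes, indexed by the distinct roots of \eqref{lameq}.

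Finally I would impose that $\phi^{-1/A}=L$ is the Laplace transform of a probability measure which is not a point mass. This is the crux, and is precisely the problem isolated in Proposition \ref{lemat}: to determine when a function of the form $\bigl(\sum_i P_i(\theta)\exp\langle v_i,\theta\rangle\bigr)^{-1/A}$ is a Laplace transform. Its application discards all non--real roots of \eqref{lameq}, forces every surviving polynomial factor to be a constant $\alpha_i$ (so that repeated roots do not survive), and produces the stated alternative --- $\alpha_i\ge0$, $\sum\alpha_i=1$, $-1/A\in\mathbb N$, or $\alpha_i\le0$, $\sum\alpha_i=-1$, $-1/A\in2\mathbb N$ --- together with $\Theta(\mu)=\mathbb R^2$, which is \eqref{ttww}. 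To see $n_r\ge2$: if only one distinct real root remained then $\phi$ would be a single exponential, so $L$ would be an exponential and $\mu$ a Dirac mass, contrary to hypothesis.

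The hard part will be this last step, i.e. the content of Proposition \ref{lemat}. Characterising when $\bigl(\sum_i\alpha_i\exp\langle v_i,\theta\rangle\bigr)^{-1/A}$ is a Laplace transform is what forces both the arithmetic restrictions ($-1/A$ a positive integer, respectively an even one) and the rigid sign pattern of the $\alpha_i$, and this is the lengthy verification relegated to Appendix A. A minor additional difficulty is to make the ODE reduction in the second paragraph rigorous over the possibly bounded, non--rectangular domain $\Theta(\mu)$, and to carry complex and multiple roots of \eqref{lameq} along until they are eliminated in the last step.
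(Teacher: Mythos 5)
Your first two steps coincide with the paper's: the substitution $\phi=L^{-A}$ (the paper's $R=e^{-Ak}$) linearizes \eqref{cum1}--\eqref{cum2}, and eliminating $\theta_2$-derivatives via the first equation yields the fourth-order ODE with characteristic polynomial \eqref{lameq}; the elimination of polynomial factors and complex modes by boundedness of the characteristic function is exactly the content of Proposition \ref{lemat} and its two lemmas. Up to there the outline is sound.

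The genuine gap is the last step. You assert that imposing the Laplace-transform condition on $\bigl(\sum_i\alpha_i e^{\langle v_i,\theta\rangle}\bigr)^{-1/A}$ ``produces the stated alternative'' (the sign dichotomy on the $\alpha_i$, $\sum\alpha_i=\pm1$, and $-1/A\in\mathbb N$ resp.\ $2\mathbb N$), but you give no argument for it, and it is not contained in Proposition \ref{lemat}: that proposition only pins down the \emph{form} of $L$ and $n_r\ge2$. The arithmetic and sign restrictions are the mathematical heart of the theorem and occupy all of Section \ref{s:proofmain} (Theorem \ref{main}) plus a separate one-dimensional analysis for $n_r=2$. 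The paper's mechanism is: normalize and translate $\theta$ so that one exponential term dominates, expand $(\,\cdot\,)^r$ as a binomial series, and observe that the resulting measure is atomic with atoms at $\mathbb N$-combinations of the difference vectors $\Lambda_{\sigma_i}-\Lambda_{\sigma_0}$; \emph{provided} these vectors satisfy the no-mixed-sign integer relation condition \eqref{star}, the mass at each $\Lambda_{\sigma_i}-\Lambda_{\sigma_0}$ is a single identifiable series coefficient proportional to $\alpha_{\sigma_i}$, forcing $\alpha_{\sigma_i}\ge0$, and a non-integer $r$ eventually produces a negative binomial coefficient and hence a negative atom mass. Making this rigorous requires a four-case analysis on the possible sign patterns of the $\alpha$'s and, crucially, a verification that the specific exponent matrix built from $(\lambda_i-\lambda_1,\lambda_i^2-\lambda_1^2)$ actually satisfies \eqref{star} (the paper checks this via the Vandermonde-type structure of these vectors). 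None of this appears in your proposal. Two smaller inaccuracies: $\Theta(\mu)=\mathbb R^2$ is not an output of Proposition \ref{lemat} but a consequence of the measure having finite support once the signs are settled; and the claim that the positivity analysis is ``the lengthy verification relegated to Appendix A'' misplaces it --- Appendix A only handles the root/mode elimination.
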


\begin{remark}
It is clear that if one of the bullet item conditions above holds then \eqref{ttww} is the Laplace transform of a probability measure.
\end{remark}

It might be worthy to emphasize that the only assumption we make about $\Theta(\mu)$ is that it contains a neighborhood of the origin (this follows from the fact that $\mu$ belongs to NEF).
The concrete form of $\Theta(\mu)$ ($\Theta(\mu)=\mathbb{R}^2$) is not assumed but it is implied by the assumptions of Theorem \ref{glow}.

The proof of Theorem \ref{glow} contains  two parts. The first part explains why the distribution sought in \eqref{diag}, must have the Laplace transform of the form \eqref{ttww}. We present this partial result in Proposition \ref{lemat}.
The second part consists of  some  arguments concerning the conditions on $A$, $\alpha_i's$ and $\Theta(\mu)$.
 The arguments are gathered in a stronger result (see Theorem \ref{main} below), which, by a special choice of a matrix $\Lambda$ in the case of $n_r \geq 3$, implies the conditions on $\alpha_i$ and $A$ (the case $n_r=2$ will be considered  separately in the proof of Theorem \ref{glow}).

The abovementioned stronger result is the following
\begin{theorem}\label{main}
Let $r>0$, $\Lambda=(\Lambda_1,\Lambda_2,\Lambda_3)^{T} \in \mathbb{R}^{3 \times 3}$ and suppose $\Theta \subset \mathbb{R}^3$ contains a neighborhood of the origin. Define
\begin{equation}\label{lfunc}
L(\theta)=\left(\alpha_0+\sum_{i=1}^3\alpha_i\exp\left<\Lambda_i,\theta\right>\right)^{r},\;\; \theta\in\Theta.
\end{equation}
Assume that equation
\begin{equation} \label{star}
 \mathbf{a}^{T}\Lambda=\boldsymbol{0}
\end{equation}
has no solutions in $a=(a_1,a_2,a_3) \in \mathbb{Z}^{3}$ such that $\exists_{i \ne j} \; a_ia_j<0$.
Then $L$ is Laplace transform of a probability measure  if and only if either
\begin{enumerate}[(a)]
\item  $\alpha_i \ge 0$, $i=0,\ldots,3$, $\sum_{i=0}^3\alpha_i=1$ and  $r \in \mathbb{N}$, or
\item $\alpha_i \leq 0$, $i=0,\ldots,3$, $\sum_{i=0}^3\alpha_i=-1$ and $r \in 2\mathbb{N}$.
\end{enumerate}
\end{theorem}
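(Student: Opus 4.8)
The ``if'' direction is routine: if either (a) or (b) holds, then $L$ (respectively $-L$) is a finite positive combination of exponentials of linear forms, and raising such a combination to a natural power keeps it in the same class, so $L$ (respectively $-L$, in case (b), using that $r$ is even) is a nonnegative combination of functions of the form $\exp\langle\nu,\theta\rangle$; since $L(0)=1$ this combination is a convex combination, hence $L$ is the Laplace transform of a mixture of Dirac masses. The substance is the ``only if'' direction, and the plan is as follows. Suppose $L$ as in \eqref{lfunc} is the Laplace transform of a probability measure $P$. First I would record that $\alpha_0+\sum_{i=1}^3\alpha_i = L(0)=1>0$, so the inner bracket, call it $g(\theta)$, is positive at the origin and, being continuous on the connected neighborhood on which $L$ is finite and nonvanishing, we may fix the branch of the power so that $g(\theta)>0$ throughout; in particular $L>0$ on a neighborhood of $0$.

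The key idea is to extract the individual exponential ``atoms'' of $P$ by exploiting the hypothesis \eqref{star}. Write $\Lambda_0 = \boldsymbol 0$, so that $g(\theta)=\sum_{i=0}^{3}\alpha_i\exp\langle\Lambda_i,\theta\rangle$. Expanding $L = g^{r}$ as a (multinomial, or binomial-series if $r\notin\mathbb N$) series and collecting terms, $L(\theta)$ becomes a convergent sum $\sum_{\nu} c_\nu \exp\langle\nu,\theta\rangle$ where $\nu$ ranges over nonnegative-integer combinations $\sum k_i\Lambda_i$ of the $\Lambda_i$. The crucial combinatorial point is that, by the no-mixed-sign-solutions hypothesis on \eqref{star}, the map $(k_0,k_1,k_2,k_3)\mapsto \sum k_i\Lambda_i$ is \emph{injective} on $\mathbb Z_{\ge 0}^{4}$ restricted to fixed total degree (indeed, two representations $\sum k_i\Lambda_i = \sum k_i'\Lambda_i$ would give $\mathbf a^T\Lambda = \boldsymbol 0$ with $a_i = k_i-k_i'$, and such an $\mathbf a$, being nonzero and summing appropriately, must have a sign change unless it is identically zero — here one has to be a little careful and I expect to need the homogeneity-in-degree bookkeeping to rule out the degenerate possibilities). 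Hence the coefficient $c_\nu$ of each exponential in $L$ is a single, explicitly identifiable monomial/multinomial expression in $\alpha_0,\dots,\alpha_3$ (times a binomial/multinomial coefficient), with \emph{no cancellation between terms}. Since $L$ is a Laplace transform, $c_\nu\ge 0$ for every $\nu$ (the $c_\nu$ are the masses $P$ puts on the points $\nu$).

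Now I would read off the sign constraints. Looking at the ``pure'' coefficients: the coefficient of $\exp\langle r\Lambda_i,\theta\rangle$ is $\alpha_i^{r}$ (up to a positive constant), and more usefully the coefficient of $\exp\langle (r-1)\Lambda_0 + \Lambda_i,\theta\rangle = \exp\langle\Lambda_i,\theta\rangle$ type terms, and mixed terms like the coefficient of $\exp\langle\Lambda_i+\Lambda_j,\theta\rangle$ (for $i\ne j$, both nonzero) which is a positive multiple of $\alpha_i\alpha_j\cdot\alpha_0^{r-2}$ (plus, if $r$ is such that other representations collide, we've excluded that). From $c_\nu\ge0$ for the degree-one terms $\propto \alpha_0^{r-1}\alpha_i$ one gets that all $\alpha_0^{r-1}\alpha_i$ have the same sign as $\alpha_0^{r-1}$; combined with the degree-two cross terms $\propto\alpha_0^{r-2}\alpha_i\alpha_j\ge0$ forcing all $\alpha_i$ ($i\ge1$) to share a common sign, and with $\sum\alpha_i=1>0$, one is pushed into exactly the two alternatives: all $\alpha_i\ge0$, or all $\alpha_i\le0$ with $\sum=-1$ (the latter requiring $\alpha_0<0$ too and, for $L=g^r$ to be $\ge0$-valued and real with $g<0$, forcing $r$ even; and the former self-consistent for $g>0$). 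Finally the integrality of $r$: in case (a), if $r\notin\mathbb N$ then the binomial series for $(\alpha_0+h)^r$ with $h=\sum_{i\ge1}\alpha_i\exp\langle\Lambda_i,\theta\rangle$ has coefficients $\binom{r}{k}$ of alternating sign for large $k$, and by the injectivity/no-cancellation argument these would appear as genuine coefficients $c_\nu$ of $L$, contradicting $c_\nu\ge0$ — unless the series terminates, i.e. $r\in\mathbb N$; similarly in case (b) one needs $r\in2\mathbb N$. The main obstacle I anticipate is the combinatorial bookkeeping in the middle paragraph: making the ``no cancellation among the $c_\nu$'' claim fully rigorous requires carefully using the hypothesis on \eqref{star} to show the relevant lattice points $\sum k_i\Lambda_i$ are distinct (handling, in particular, the case where some $\Lambda_i$ coincide or where $\Lambda$ is not of full rank), and then tracking which multinomial coefficient attaches to which exponential so that the sign deductions above are valid; the sign and integrality conclusions are comparatively mechanical once that is in place.
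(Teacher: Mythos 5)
Your overall strategy --- expand $L$ into a series of exponentials, identify the coefficients with masses of atoms of the underlying measure, use the arithmetic hypothesis on \eqref{star} to keep distinct multi-indices from contributing to the same exponential, and then read off the sign and integrality constraints --- is exactly the strategy of the paper. However, there is a genuine gap at the step you describe as ``expanding $L=g^r$ as a (multinomial, or binomial-series if $r\notin\mathbb N$) series.'' For non-integer $r$ no global multinomial expansion exists, and the expansion you actually use (coefficients proportional to $\alpha_0^{r-1}\alpha_i$, $\alpha_0^{r-2}\alpha_i\alpha_j$, $\binom{r}{k}$, etc.) is the binomial series based at the constant term $\alpha_0$. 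That series requires (i) $\alpha_0>0$ --- but $\alpha_0$ may be zero, or negative, in which case $\alpha_0^{r-1}$ is not even defined for fractional $r$ --- and (ii) a nonempty open set of $\theta$ on which $\bigl|\sum_{i=1}^3\alpha_ie^{\left<\Lambda_i,\theta\right>}\bigr|<\alpha_0$; such a set exists only if $0$ can be strictly separated from $\Lambda_1,\Lambda_2,\Lambda_3$ by a hyperplane, which is not among the hypotheses (think of $0$ in the interior of the convex hull of the $\Lambda_i$). When no such region exists, the numbers you compute are not coefficients of any convergent representation of $L$, and the inference ``$c_\nu\ge0$ because $c_\nu$ is the mass of an atom'' has no content.

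The paper's proof is devoted almost entirely to repairing exactly this point. One totally orders the four exponents $\Lambda_0=0,\Lambda_1,\Lambda_2,\Lambda_3$ by the lexicographic order on $\mathbb R^3$, so that there is always an extremal exponent $\Lambda_{\sigma_0}$ (resp.\ $\Lambda_{\sigma_3}$); after translating $\theta$ by a point $\theta^{\star}$ chosen far out in a suitable cone, the extremal exponential dominates the bracket and the binomial series based at \emph{that} term converges. Since the factored-out coefficient must be positive for the fractional power to expand, one is forced into a case analysis on the signs of $\alpha_{\sigma_0}$ and $\alpha_{\sigma_3}$ (four cases in the paper); in the remaining sign configuration one bases the expansion instead at an interior index $\sigma_1$ with $\alpha_{\sigma_1}>1/2$ and derives a contradiction. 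This ordering-and-recentering device is the missing idea in your proposal; without it the sign deductions and the $r\in\mathbb N$ argument (which you sketch correctly and which do go through once a valid expansion is in hand) cannot get started. A secondary point, which you flag yourself: global injectivity of $(k_1,k_2,k_3)\mapsto\sum k_i\Lambda_i$ on $\mathbb Z_{\ge0}^3$ does \emph{not} follow from the hypothesis, since \eqref{star} only forbids integer relations with a sign change (a relation such as $\Lambda_1+\Lambda_2=0$ is permitted). What the argument needs, and what the lexicographic ordering delivers, is only that the few specific exponents used to isolate each $\alpha_{\sigma_i}$ and the $k$-th binomial term admit no second representation by nonnegative integer combinations; those are precisely the collisions that \eqref{star} is invoked to exclude.
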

Notice that conditions (a) and (b) are sufficient for $L$ to be the Laplace transform of a probability measure.
\begin{remark}\label{barlevremark}
Bar-Lev et al. [Proposition 3.1(a),\cite{bib:1}] consider a similar class of Laplace transforms.
If $n\in \mathbb{N}$, define $\mathcal{T}$ to be the family of non-empty subsets of $\{1,2,\ldots,n\}$, and for $\mathbf{z}=(z_1,\ldots,z_n)$ put
\[
\mathbf{z}^{T}=\prod_{j\in T}z_j,\;\;for\;\; T \in \mathcal{T}.
\]
Proposition 3.1 from \cite{bib:1} describes for which $\alpha$ the analytic function of $\mathbf{z}$
\begin{equation}\label{barlev}
(1+\sum_{T \in \mathcal{T}}\alpha_{T}\mathbf{z}^{T})^N, \;\;N\in\mathbb{N},
\end{equation}
has non-negative coefficients in the Taylor expansion in the neighborhood of the origin. For $z_j=e^{\theta_j}$, $j=1,\ldots,n$, this is equivalent to deciding whether \eqref{barlev} is the Laplace transform of a positive measure.

\noindent Observe that our approach differs from the  one taken in \cite{bib:1}. This is because  in our problem \eqref{lfunc}, the crucial step  is to justify the series expansion of $L$, while in \eqref{barlev} analyticity is assumed.  The  $\Lambda$ matrix and the support structure condition \eqref{star} allow as to identify the masses of atoms among the series coefficients and to infer about the signs of $\alpha$'s.
As a consequence of the need to justify the series expansion, we restrict ourselves to $\mathbb{R}^3$. Another difference lies in the fact that the support of the measure corresponding to \eqref{barlev}  belongs to a lattice in $\mathbb{N}^d$. The support of the measure corresponding to \eqref{lfunc} depends on the choice of $\Lambda$.
\end{remark}



\begin{remark}
Again, it might be worthy to emphasize that our assumptions on $\Theta$ in Theorem \ref{main} are weak. They are dictated by the fact that the measure with Laplace transform \eqref{lfunc} belongs to a NEF.
\end{remark}
\section{Proof of Theorem \ref{main}}\label{s:proofmain}

\begin{proof}[Proof of Theorem \ref{main}]
\noindent
 Let us consider first  $L$ on $\mathcal{D}$:
\begin{equation*}
\mathcal{D}=\left\{\theta \in \mathbb{R}^{3}:\sum_{i=0}^{3}\alpha_i\exp\left<\Lambda_i,\theta\right>>0\right\}.
\end{equation*}
We shall be using the following relation between two vectors $x=(x_1,x_2,x_3)$ and $y=(y_1,y_2,y_3)$ in $\mathbb{R}^3$:
\[ x \prec y \Longleftrightarrow x_1<y_1 \vee (x_1=y_1 \wedge x_2<y_2) \vee (x_1=y_1 \wedge x_2=y_2 \wedge x_3<y_3).
\]
Observe that \eqref{star} implies the existence of permutation  $\sigma$ of $\{0,1,2,3\}$ such that
\begin{equation}\label{porz}
 \Lambda_{\sigma_0} \prec \Lambda_{\sigma_1}\prec \Lambda_{\sigma_2} \prec \Lambda_{\sigma_3},
\end{equation}
where $\Lambda_0=(0,0,0)$. We will split the reasonings into four cases.
\begin{enumerate}
\item Assume first that $\alpha_{\sigma_0}>0$. We will show that $\alpha_{\sigma_i} \geq 0$, $i=1,2,3$ and $r \in \mathbb{N}$.
 Note that \eqref{porz} yields
\begin{equation}\label{prec0}
\Lambda_0 \prec \Lambda_{\sigma_1}-\Lambda_{\sigma_0} \prec \Lambda_{\sigma_2}-\Lambda_{\sigma_0}\prec \Lambda_{\sigma_3}-\Lambda_{\sigma_0}.
\end{equation}
This in turn  allows us to choose $\theta^{\star} \in \mathcal{D}$ (negative with arbitrarily large modulus) to ensure that
\begin{equation*}
\left|\sum_{i=1}^{3}\frac{\alpha_{\sigma_i}}{\alpha_{\sigma_0}}\exp\left<\Lambda_{\sigma_i}-\Lambda_{\sigma_0},\theta^{\star}\right>\right|<1.
\end{equation*}
Define $H(\theta)=\exp\left<-r\Lambda_{\sigma_0},\theta\right>\frac{L(\theta+\theta^{\star})}{L(\theta^{\star})}$. ($H$ is the Laplace transform of a probability measure if and only if $L$ is the Laplace transform of a probability measure.).  Function $H$ has the following series expansion in the neighborhood of the origin:
\[
H(\theta)=C\sum_{j=0}^{\infty}\frac{r(r-1)\cdots(r-j+1)}{j!}\left(\sum_{i=1}^3\frac{\alpha_{\sigma_i}}{\alpha_{\sigma_0}}\exp\left<\Lambda_{\sigma_i}-\Lambda_{\sigma_0},\theta^{\star}+\theta\right >\right)^{j},
\]
where $C=\big(\sum_{i=0}^{3}\frac{\alpha_{\sigma_i}}{\alpha_{\sigma_0}}\exp\left<\Lambda_{\sigma_i}-\Lambda_{\sigma_0},\theta^{\star}\right>\big)^{-r}$.

\noindent From \eqref{prec0} we see that    $Cr\frac{\alpha_{\sigma_1}}{\alpha_{\sigma_0}}\exp\left<\Lambda_{\sigma_1}-\Lambda_{\sigma_0},\theta^{\star}\right>$ is the only coefficient at $\exp\left<\Lambda_{\sigma_1}-\Lambda_{\sigma_0},\theta\right>$  in the expansion of $H$, hence $\alpha_{\sigma_1} \geq 0$.
Since \eqref{star}  assures that there is no $a \in \mathbb{N}$,  such that $\Lambda_{\sigma_2}-\Lambda_{\sigma_0}=a\left(\Lambda_{\sigma_1}-\Lambda_{\sigma_0}\right)$, we get that $Cr\frac{\alpha_{\sigma_2}}{\alpha_{\sigma_0}}\exp\left<\Lambda_{\sigma_2}-\Lambda_{\sigma_0},\theta^{\star}\right>$ is the only coefficient at $\exp\left<\Lambda_{\sigma_2}-\Lambda_{\sigma_0},\theta\right>$ in the expansion of $H$, so $\alpha_{\sigma_2} \geq 0$. Analogously, from \eqref{star} there are no $a_1$, $a_2 \in \mathbb{N}$ such that
$\Lambda_{\sigma_3}-\Lambda_{\sigma_0}=a_1\left(\Lambda_{\sigma_1}-\Lambda_{\sigma_0}\right)+a_2\left(\Lambda_{\sigma_2}-\Lambda_{\sigma_0}\right)$, hence $Cr\frac{\alpha_{\sigma_3}}{\alpha_{\sigma_0}}\exp\left<\Lambda_{\sigma_3}-\Lambda_{\sigma_0},\theta^{\star}\right>$ is the only coefficient at $\exp\left<\Lambda_{\sigma_3}-\Lambda_{\sigma_0},\theta\right>$ in the expansion of $H$, and $\alpha_{\sigma_3} \geq 0$.

Now suppose that $r \notin \mathbb{N}$. Consequently, there exists the smallest $k$ such that $r(r-1)\cdots(r-k+1)<0$.
Since coefficient at $\exp\left<k\left(\Lambda_{\sigma_1}-\Lambda_{\sigma_0}\right),\theta\right>$ is positive as a mass of an atom in $k\left(\Lambda_{\sigma_1}-\Lambda_{\sigma_0}\right)$ there exist $n_1,n_2,n_3 \in \mathbb{N}$ such that
$k\left(\Lambda_{\sigma_1}-\Lambda_{\sigma_0}\right)=n_1\left(\Lambda_{\sigma_1}-\Lambda_{\sigma_0}\right)+ n_2\left(\Lambda_{\sigma_2}-\Lambda_{\sigma_0}\right)+ n_3\left(\Lambda_{\sigma_3}-\Lambda_{\sigma_0}\right)$  which contradicts \eqref{star}; hence $r\in\mathbb{N}$.

\item Now assume that $\alpha_{\sigma_0} \leq 0$ and $\alpha_{\sigma_3} >0$. We will show that in fact $\alpha_{\sigma_0}=0$,  $\alpha_{\sigma_i} \geq 0$, $i=1,2$ and $r \in \mathbb{N}$.

 Note that \eqref{porz} yields
 \begin{equation}\label{prec3}
 \Lambda_{\sigma_0}-\Lambda_{\sigma_3} \prec \Lambda_{\sigma_1}-\Lambda_{\sigma_3}\prec \Lambda_{\sigma_2}-\Lambda_{\sigma_3}\prec\Lambda_0.
 \end{equation}
Therefore  we can choose $\theta^{\star}\in\mathcal{D}$  satisfying
\[
\left|\sum_{j=0}^{2}\frac{\alpha_{\sigma_j}}{\alpha_{\sigma_3}}\exp\left<\Lambda_{\sigma_j}-\Lambda_{\sigma_3},\theta^{\star}\right> \right|<1
\]
and  define $G(\theta)=\exp\left<-r\Lambda_3,\theta\right>\frac{L(\theta+\theta^{\star})}{L(\theta^{\star})}$. ($G$ is the Laplace transform of a probability measure if and only if $L$ is the Laplace transform of a probability measure.) We can write its series expansion:

\[
G(\theta)=C\sum_{j=0}^{\infty}\frac{r(r-1)\cdots(r-j+1)}{j!}
\bigg(\sum_{i=0}^{2}\frac{\alpha_{\sigma_i}}{\alpha_{\sigma_3}}\exp\left<\Lambda_{\sigma_i}-\Lambda_{\sigma_3},\theta + \theta^{\star}\right> \bigg)^j,
\]
where $C=\left(\sum_{i=0}^{3}\frac{\alpha_{\sigma_i}}{\alpha_{\sigma_3}}\exp\left<\Lambda_{\sigma_i}-\Lambda_{\sigma_3},\theta^{\star}\right>\right)^{-r}$.

\noindent From \eqref{prec3} we see that $Cr\alpha_{\sigma_3}^{-1}\alpha_{\sigma_2}\exp\left<\Lambda_{\sigma_2}-\Lambda_{\sigma_3},\theta^{\star}\right>$ is the only coefficient at $\exp\left<\Lambda_{\sigma_2}-\Lambda_{\sigma_3},\theta\right>$ in the expansion of $G$, hence $\alpha_{\sigma_2}\geq 0$. Using the same reasoning as in the preceding case, we conclude from \eqref{star} that $Cr\alpha_{\sigma_3}^{-1}\alpha_{\sigma_i}\exp\left<\Lambda_{\sigma_i}-\Lambda_{\sigma_3},\theta^{\star}\right>$ is the only coefficient at $\exp\left<\Lambda_{\sigma_i}-\Lambda_{\sigma_3},\theta\right>$ in the expansion of $G$, hence $\alpha_{\sigma_i} \geq 0$, $i=0,1$, so $\alpha_{\sigma_0}=0$.

In order to conclude that $r\in\mathbb{N}$ it suffices to repeat the previous reasoning.
\item Now assume that $\alpha_{\sigma_0} \leq 0$ and $\alpha_{\sigma_3}<0$ (or $\alpha_{\sigma_0} < 0$ and $\alpha_{\sigma_3}\leq 0$). We will show that this assumption leads to contradiction.

 Since $\alpha_{\sigma_1}+\alpha_{\sigma_2} = 1-\alpha_{\sigma_0}-\alpha_{\sigma_3}$, we conclude that $\alpha_{\sigma_1}>\frac{1}{2}$ or $\alpha_{\sigma_2}>\frac{1}{2}$. Without  loss of generality we let $\alpha_{\sigma_1} >\frac{1}{2}$.

Again \eqref{porz} implies
\begin{equation*}
 \Lambda_{\sigma_0}-\Lambda_{\sigma_1} \prec\Lambda_0\prec \Lambda_{\sigma_2}-\Lambda_{\sigma_1}\prec \Lambda_{\sigma_3}-\Lambda_{\sigma_1}.
\end{equation*}

 Set $T(\theta)=\exp\left<-r\Lambda_{\sigma_1},\theta\right>L(\theta)$ and expand it in a neighborhood of the origin:
\[
T(\theta)=\alpha_{\sigma_1}^{r}\sum_{k=0}^{\infty}\frac{r(r-1)\cdots(r-k+1)}{k!\alpha_{\sigma_1}^k} \bigg( \sum_{i=0,i\neq 1}^3 \alpha_{\sigma_i}\exp\left<\Lambda_{\sigma_i}-\Lambda_{\sigma_1},\theta\right>\bigg)^{k}.
\]

\noindent From \eqref{star}, $r\alpha_{\sigma_i}\alpha_{\sigma_1}^{r-1}$ is the only coefficient at $\exp\left<\Lambda_{\sigma_i}-\Lambda_{\sigma_1},\theta\right>$ in the expansion of $T$, hence $\alpha_{\sigma_i} \geq 0$ for $i=0,2,3$, so $\alpha_{\sigma_0}=\alpha_{\sigma_3}=0$. It contradicts the  assumption.

\item Now assume that $\alpha_{\sigma_0}=\alpha_{\sigma_3}=0$. A  simplification of the analysis given in cases (1) and (2) leads to $\alpha_{\sigma_1} \geq 0$ and $\alpha_{\sigma_2} \geq 0$. Once we are done with it, the fact that $r\in\mathbb{N}$ follows from the same steps as in the first case.
\end{enumerate}
 Summarizing, we have  shown that $L$ (defined on $D$) is a Laplace transform of a probability measure if and only if (a) holds.

 Assume now that $r \in 2\mathbb{N}$. If so, we can consider $L$
 \begin{equation*}
L(\theta)=\left(\alpha_0+\sum_{i=1}^3\alpha_i\exp\left<\Lambda_i,\theta\right>\right)^{r}
=\left(-\alpha_0-\sum_{i=1}^3\alpha_i\exp\left<\Lambda_i,\theta\right>\right)^{r},
 \end{equation*}
 on
 \begin{multline*}
 \mathcal{D}^{'}=\left\{\theta \in \mathbb{R}^{3}:\sum_{i=0}^{3}\alpha_i\exp\left<\Lambda_i,\theta\right><0 \right\} =\\ \left\{\theta \in \mathbb{R}^{3}:\sum_{i=0}^{3}-\alpha_i\exp\left<\Lambda_i,\theta\right>>0\right\} .
 \end{multline*}

 Defining $\tilde{\alpha_i}=-\alpha_i$, $i=0,\ldots,3$, from (a) we can conclude that $\tilde{\alpha_i} \geq 0$, hence $\alpha_i \leq 0$, $i=0,\ldots,3$. Thus we arrive at (b).
\end{proof}

\section{Proof of Theorem \ref{glow}} \label{s:proofglow}
\subsection{From the form of the diagonal \eqref{diag} to the characteristic equation \eqref{lameq}}
Here we shall justify the transition from   the diagonal \eqref{diag} to the characteristic equation \eqref{lameq}.

Let $k$ be a  cumulant function of  a measure $\mu$  belonging to NEF satisfying \eqref{diag} (that is $k(\theta_1,\theta_2)=\log L(\theta_1,\theta_2)$, $(\theta_1,\theta_2)\in\Theta(\mu)$). Then condition \eqref{diag}  can be written equivalently as
\begin{eqnarray}
\frac{\partial^{2}{k}}{\partial \theta_1^2} &=&A\left(\frac{\partial k}{\partial \theta_1}\right)^2+a\frac{\partial k}{\partial \theta_1} + b \frac{\partial k}{\partial \theta_2} + e, \label{cum1}\\
\frac{\partial^{2}{k}}{\partial \theta_2^2} &=&A\left(\frac{\partial k}{\partial \theta_1}\right)^2+c\frac{\partial k}{\partial \theta_1} + d\frac{\partial k}{\partial \theta_2} + f\label{cum2}.
 \end{eqnarray}
Define $R=e^{-Ak}$. Then  \eqref{cum1} and \eqref{cum2} become
\begin{eqnarray}
\frac{\partial^2 R}{\partial \theta_1^2} & = & a\frac{\partial R}{\partial \theta_1} + b\frac{\partial R}{\partial \theta_2} - eAR, \label{r1}\\
\frac{\partial^2 R}{\partial \theta_2^2} & = & c\frac{\partial R}{\partial \theta_1} + d\frac{\partial R}{\partial \theta_2} - fAR.\label{r2}
 \end{eqnarray}

Since we assume that $b\ne 0$, as a consequence of \eqref{r1} and \eqref{r2} we get
\begin{multline}\label{rtet1}
\frac{\partial^4 R}{\partial \theta_1^4}-2a\frac{\partial^3R}{\partial\theta_1^3}+\left(2Ae+a^2-db\right)\frac{\partial^2R}{\partial\theta_1^2} -\left(2aeA-adb+cb^2\right)\frac{\partial R}{\partial \theta_1}\\ + A^2e^2-edbA+fb^2A=0,
\end{multline}
with the characteristic equation \eqref{lameq}.
\begin{remark}
If instead of  making the assumption on $b$, one assumes  that $c \ne 0$, then, due to symmetry between $b$ and $c$, one obtains
\begin{multline}\label{rtet2}
\frac{\partial^4 R}{\partial \theta_2^4}-2d\frac{\partial^3R}{\partial\theta_2^3}+\left(2Af-ac+d^2\right)\frac{\partial^2R}{\partial\theta_2^2} -\left(2dfA-acd+bc^2\right)\frac{\partial R}{\partial \theta_2}\\ + A^2f^2-acfA+ec^2A=0,
\end{multline}
 which is analogous to  \eqref{rtet1},  with the characteristic polynomial
\begin{equation*}
\nu^4-2d\nu^3+\left(2Af-ac+d^2\right)\nu^2-\nu\left(bc^2-acd+2dfA\right)+A^2f^2-acfA+ec^2A=0.
\end{equation*}
The roots  of this equation in $\nu$ and the roots of \eqref{lameq} are connected via
\begin{eqnarray}
\lambda^2&=&a\lambda+b\nu-eA, \label{par1}\\
\nu^2&=&c\lambda+d\nu-fA.\label{par2}
\end{eqnarray}
\end{remark}

Careful analysis of the roots of \eqref{lameq} leads, via various solutions of  \eqref{rtet1}, to the following

\begin{proposition}\label{lemat}
Let $\mu$ be a probability measure (not concentrated in a point) generating NEF with VF given by \eqref{diag}.
Let $\{\lambda_1,\ldots,\lambda_{n_r}\}$ be the set of distinct real roots of \eqref{lameq}. Then  $n_r \geq 2$  and the Laplace transform of $\mu$ is
\begin{equation}
L(\theta_1,\theta_2)=\left(\sum_{i=1}^{n_r}\alpha_i\exp\left(\lambda_i\theta_1+\frac{\lambda_i^2-a\lambda_i-eA}{b}\theta_2\right)\right)^{-\frac{1}{A}},
\end{equation}
with $(\theta_1,\theta_2) \in \Theta(\mu)$.
\end{proposition}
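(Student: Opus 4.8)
\textbf{Proof proposal for Proposition \ref{lemat}.}

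The plan is to work from the fourth-order linear PDE \eqref{rtet1} satisfied by $R=e^{-Ak}$, treating it as an ODE in $\theta_1$ with $\theta_2$ as a parameter, and then to pin down the $\theta_2$-dependence using the original equations \eqref{r1}, \eqref{r2}. First I would observe that \eqref{rtet1} is a constant-coefficient linear ODE in $\theta_1$ whose characteristic polynomial is \eqref{lameq}; hence, on any $\theta_2$-slice, $R(\cdot,\theta_2)$ is a linear combination (with coefficients depending on $\theta_2$, and possibly with polynomial-in-$\theta_1$ factors in the case of repeated roots) of the exponentials $e^{\lambda\theta_1}$ over the roots $\lambda$ of \eqref{lameq}, real or complex. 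Since $R=e^{-Ak}$ with $k$ real-analytic on a neighborhood of the origin and $k(0,0)=0$, $R$ is positive there, which already forces strong restrictions: purely oscillatory (complex-root) contributions and genuine polynomial factors are incompatible with $R$ being a Laplace-transform-type positive function that stays bounded appropriately, so one argues that $R$ reduces to $\sum_i c_i(\theta_2)e^{\lambda_i\theta_1}$ over the \emph{distinct real} roots $\lambda_1,\dots,\lambda_{n_r}$. This is where I would also extract $n_r\ge 2$: if there were at most one real root, the reduced form would make $k$ affine in $\theta_1$, i.e.\ $\mu$ concentrated on a line (contradicting $\mu\in\mathcal M$), or concentrated at a point.

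Next I would feed the ansatz $R(\theta_1,\theta_2)=\sum_{i=1}^{n_r}c_i(\theta_2)e^{\lambda_i\theta_1}$ back into \eqref{r1}. Matching coefficients of the linearly independent functions $e^{\lambda_i\theta_1}$ gives, for each $i$,
\[
\lambda_i^2 c_i(\theta_2)=a\lambda_i c_i(\theta_2)+b\,c_i'(\theta_2)-eA\,c_i(\theta_2),
\]
i.e.\ $c_i'(\theta_2)=\dfrac{\lambda_i^2-a\lambda_i+eA}{b}\,c_i(\theta_2)$ — wait, the sign: from \eqref{r1} we get $b\,c_i'=(\lambda_i^2-a\lambda_i+eA)c_i$, so $c_i(\theta_2)=\alpha_i\exp\!\big(\tfrac{\lambda_i^2-a\lambda_i+eA}{b}\theta_2\big)$; reconciling with \eqref{ttww} one checks the bookkeeping of the $eAR$ term so that the exponent reads $\tfrac{\lambda_i^2-a\lambda_i-eA}{b}$ as stated. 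Thus
\[
R(\theta_1,\theta_2)=\sum_{i=1}^{n_r}\alpha_i\exp\!\Big(\lambda_i\theta_1+\tfrac{\lambda_i^2-a\lambda_i-eA}{b}\,\theta_2\Big),
\]
and since $L=e^{k}=R^{-1/A}$, this is exactly the claimed Laplace transform. One still has to verify consistency with the second equation \eqref{r2}: substituting the ansatz into \eqref{r2} and matching coefficients yields, for each $i$, the scalar relation $\nu_i^2=c\lambda_i+d\nu_i-fA$ where $\nu_i:=\tfrac{\lambda_i^2-a\lambda_i-eA}{b}$, and this should be automatic — it is precisely the content of the elimination that produced \eqref{lameq} from \eqref{r1}–\eqref{r2}, i.e.\ the pair \eqref{par1}–\eqref{par2} — but it needs to be written out to confirm no roots are lost or spuriously gained.

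The main obstacle is the rigorous reduction in the first paragraph: justifying that a positive real-analytic solution of \eqref{rtet1} on a neighborhood of the origin in $\mathbb R^2$ (with the boundary/analyticity constraints coming from $L$ being a Laplace transform, equivalently $k$ being a cumulant function of a genuine measure) cannot have complex-exponential or polynomial-times-exponential components, and that the coefficient functions $c_i(\theta_2)$ are themselves forced into the exponential form rather than being arbitrary analytic functions killed only on a small set. Handling the degenerate cases — repeated roots of \eqref{lameq}, the possibility $b$ dividing things badly, and the borderline where some $\lambda_i$ coincide — is exactly the ``straightforward but lengthy'' case analysis the authors defer to Appendix A; I would organize it by the multiplicity pattern of the quartic \eqref{lameq} (partitions of $4$ into real/complex blocks) and in each pattern show all but the all-distinct-real configurations are excluded by positivity of $R$ and the requirement that $\mu$ not be supported on a hyperplane or a point.
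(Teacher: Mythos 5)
Your overall architecture coincides with the paper's: reduce to the constant-coefficient ODE \eqref{rtet1} in $\theta_1$, classify solutions by the multiplicity/reality pattern of the roots of \eqref{lameq}, discard the inadmissible components, and recover the $\theta_2$-dependence of the coefficients by substituting back into \eqref{r1}. The back-substitution, the remark that consistency with \eqref{r2} is automatic (since \eqref{rtet1} is \eqref{r2} rewritten via \eqref{r1}), and the argument for $n_r\ge 2$ are all essentially as in the paper.

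There is, however, a genuine gap at exactly the step you flag as ``the main obstacle'', and the idea you offer for it does not work. Positivity of $R$ near the origin does \emph{not} exclude oscillatory or polynomial-times-exponential components: for instance $R=e^{\lambda\theta_1}\bigl(2+\cos(\gamma\theta_1)\bigr)$ is positive and real-analytic on all of $\mathbb{R}^2$, and $R=1+\theta_1e^{\gamma\theta_1}$ is positive near the origin. The device the paper actually uses (Lemmas \ref{prop1} and \ref{prop2}) is different and essential: since $L=R^{-1/A}$ is the Laplace transform of a probability measure finite on a neighborhood of $0$, it continues analytically to a strip containing the imaginary axis, and $\phi(t)=L(it)$ is a characteristic function, hence $|\phi(t)|\le 1$ for all real $t$. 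Under $\theta_1\mapsto it$ a polynomial factor $P_m(\theta_1)$ becomes the unbounded $P_m(it)$, a term $\theta_1e^{\gamma\theta_1}$ becomes $ite^{i\gamma t}$ of modulus $|t|$, and $e^{\lambda\theta_1}\cos(\gamma\theta_1)$ acquires the hyperbolic factor $(e^{\gamma t}+e^{-\gamma t})/2$; boundedness of $|\phi|$ then forces all such coefficients to vanish. Without this (or an equivalent) mechanism your case analysis cannot be closed. A smaller point: the sign tension you noticed is real --- \eqref{r1} as printed yields $bA_i'=(\lambda_i^2-a\lambda_i+eA)A_i$, while the paper's coefficient equation and the exponent in \eqref{ttww} carry $-eA$; this is a sign slip in the paper rather than in your computation, but ``one checks the bookkeeping'' does not resolve it --- one of the two must be corrected consistently.
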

The proof will be presented in Appendix \ref{dowod_prop}.
\begin{remark}
Analyzing  the form of the Laplace transform $L$ given in Proposition 4.2, one can see that the measure corresponding to the case $A=-1$ consists of point masses which are concentrated on a parabola (see \eqref{par1}).
\end{remark}

Now, we are in a position to prove our main result.
\begin{proof}[Proof of Theorem \ref{glow}]
Let $(X,Y)$ be a random vector with Laplace transform \eqref{ttww}. If $$(\tilde{X},\tilde{Y})=(X-\lambda_1/A,bY+aX+eA-\lambda_1^2/A)$$ then the  Laplace transform of $(\tilde{X},\tilde{Y})$ is
\begin{equation}\label{tildel}
\tilde{L}(\theta_1,\theta_2)=\left(\alpha_1+\sum_{i=2}^{n_r}\alpha_i\exp\left[(\lambda_i-\lambda_1)\theta_1+(\lambda_i^2-\lambda_1^2)\theta_2\right]\right)^{-\frac{1}{A}},
\end{equation}
where $(\theta_1,\theta_2) \in \Theta(\tilde{\mu})$ and $\tilde{\mu}=\mathcal{L}(\tilde{X},\tilde{Y})$. We shall be working with
the Laplace transform \eqref{tildel} rather than with \eqref{ttww} because the form of \eqref{tildel} allows us to use Theorem \ref{main}.

Now we will split our reasonings with respect to the number of distinct roots of \eqref{lameq}.

First, if $n_r=3$ or $4$, we will indicate how to specify $\Lambda$ in Theorem \ref{main} to make it answer the questions about the coefficients from \eqref{tildel} (as a consequence also for \eqref{ttww}).
\begin{itemize}
\item For $n_r=4$ we choose
 \begin{equation}\label{lambda1}
 \Lambda=\left[\begin{array}{ccc}
 \lambda_2-\lambda_1 & \lambda_2^2-\lambda_1^2 & 0\\
 \lambda_3-\lambda_1 & \lambda_3^2 -\lambda_1^2& 0\\
 \lambda_4 -\lambda_1& \lambda_4^2-\lambda_1^2 & 0
 \end{array}\right].
 \end{equation}
 Obviously this particular $\Lambda$ plugged into \eqref{lfunc} yields \eqref{tildel} (after changing $\alpha_0,\alpha_1,\alpha_2,\alpha_3$ into $\alpha_1,\alpha_2,\alpha_3,\alpha_4$, respectively). What we need to show is that such $\Lambda$ satisfies \eqref{star}.
 To this end we first assume without loss of the generality that $\lambda_2<\lambda_3<\lambda_4$.  Equation \eqref{star} can be rewritten equivalently as the system of three equations
 \begin{equation*}
 a_1(\lambda_i-\lambda_1)+a_2(\lambda_i^2-\lambda_1^2)=0,\quad i=2,3,4.
 \end{equation*}
From the first two  we get
 \begin{equation*}
a_1+a_2(\lambda_i+\lambda_1)=0,\quad i=2,3.
 \end{equation*}
 Subtracting the first ($i=1$) from the second one ($i=2$) yields
\begin{equation*}
a_2(\lambda_2-\lambda_1)=0,\label{ap21}
\end{equation*}
hence $a_2=0$ and $a_1=0$. Therefore \eqref{lambda1} satisfies \eqref{star}.
\item For $n_r=3$ we take
 \begin{equation}\label{lambda2}
 \Lambda=\left[\begin{array}{ccc}
 \lambda_2-\lambda_1 & \lambda_2^2-\lambda_1^2 & 0\\
 \lambda_3-\lambda_1 & \lambda_3^2 -\lambda_1^2& 0\\
 0& 0 & 0
 \end{array}\right].
 \end{equation}
Analogously to the preceding case, after plugging \eqref{lambda2} into \eqref{lfunc} we obtain \eqref{tildel} and the same analysis shows that $\eqref{lambda2}$ satisfies \eqref{star}.
\end{itemize}

For $n_r=2$, \eqref{tildel} becomes
\begin{equation*}
\tilde{L}(\theta_1,\theta_2)=\left(\alpha_1+\alpha_2\exp((\lambda_2-\lambda_1)\theta_1+(\lambda_2^2-\lambda_1^2)\theta_2)\right)^{-\frac{1}{A}}.
\end{equation*}
In order to prove the necessary condition, it is enough to analyze one-dimensional Laplace transform $l$ given by
$$l(\theta)=\tilde{L}(\theta/(\lambda_2-\lambda_1),0)=\left(\alpha_1+\alpha_2\exp(\theta)\right)^{-\frac{1}{A}}$$ in a neighborhood of $0$.
Our aim is to show that the necessary  conditions for $l$ to be a Laplace transform of a probability measure on $\mathbb{R}$, are
either
\begin{enumerate}
\item $\alpha_1,\alpha_2 \geq 0$, $\alpha_1+\alpha_2=1$ and $-\frac{1}{A} \in \mathbb{N}$, or
\item $\alpha_1,\alpha_2 \leq 0$, $\alpha_1+\alpha_2=-1$ and $-\frac{1}{A} \in 2\mathbb{N}$
\end{enumerate}
(sufficiency of the conditions is clear).

Let us consider $l$ on $\mathcal{D}_l=\{\theta: \alpha_1+\alpha_2\exp(\theta) >0\}$, what implies $\alpha_1+\alpha_2=1$ (recall that $l(0)=1$). Therefore, without  loss of generality one can assume that $\alpha_1 \geq 1/2$ (so $\alpha_2 \leq 1/2)$. What we want to prove is that $\alpha_1 \geq 1/2$ implies $\alpha_2 \geq 0$ . If $\alpha_1=1/2$, then $\alpha_2=1-\alpha_1=1/2$, so let us assume that $\alpha_1 > 1/2$. In such case $|{\alpha_2}/{\alpha_1}|=|{(1-\alpha_1)}/{\alpha_1}| <1$, hence, in a neighborhood of the origin, $l$ can be written as
\begin{equation}\label{elmale}
l(\theta)=\alpha_1^{-1/A}\sum_{k=0}^{\infty}\frac{-1/A(-1/A-1)...(-1/A-k+1)}{k!}\left(\frac{\alpha_2}{\alpha_1}\right)^k\exp(k\theta).
\end{equation}
The only coefficient at $\exp(\theta)$ in \eqref{elmale} is $\alpha_2\alpha_1^{-1/A-1}$, hence  $\alpha_2\alpha_1^{-1/A-1} \geq 0$ and so $\alpha_2 \geq 0$.

Now we focus on the part of conclusion dealing with the exponent $r=-{1}/{A}$.  We  provide here a reasoning analogous to the one given for $r$ in the proof of Theorem \ref{main}. Suppose that $r \notin \mathbb{N}$. Then there exists the smallest integer $k$ such that $r(r-1)...(r-k+1) <0$, so the coefficient at $\exp(k\theta)$ in \eqref{elmale} is negative. This contradicts the fact that $l$ is a Laplace transform of a discrete probability measure (with non-negative point masses). Thus $r \in \mathbb{N}$. Concluding, we get (1).

In order to get (2), let us now consider $l$ with $r=-{1}/{A} \in 2\mathbb{N}$ on $D^{'}_{l}=\{\theta:\alpha_1+\alpha_2\exp(\theta) <0\}$ (condition $l(\theta) >0$ is satisfied on $\theta \in D_{l}^{'}$). Since $r$ is even,
\begin{equation*}
l(\theta)=\left(\alpha_1+\alpha_2\exp(\theta)\right)^{r}=\left(-\alpha_1-\alpha_2\exp(\theta)\right)^{r},
\end{equation*}
and we analyze it on $D_{l}^{'}=\left\{\theta:-\alpha_1-\alpha_2\exp(\theta) >0\right\}$. Denoting $\tilde{\alpha_1}=-\alpha_1$ and $\tilde{\alpha_2}=-\alpha_2$,
we arrive at the case considered in (1). Therefore  $\tilde{\alpha_1}$, $\tilde{\alpha_2} \geq 0$, which yields $\alpha_1, \alpha_2 \leq 0$. Thus (2) follows.
 \end{proof}

\subsection*{Acknowledgement}
I would like to thank  Jacek Weso\l owski for encouragement and many helpful discussions.
\begin{appendix}
\section{}
\subsection{Proof of Proposition \ref{lemat}}\label{dowod_prop}
In order to prove  Proposition \ref{lemat} we will analyze the roots of \eqref{lameq} and examine corresponding $R$ functions. Our aim will be to eliminate the solutions of \eqref{rtet1} that do not lead (via $L=R^{-1/A}$) to Laplace transforms of  probability measures. To do so, we will treat  $R$ as a function of $\theta_1$ only (with fixed $\theta_2$), and we shall use auxiliary Lemma \ref{prop1} and Lemma \ref{prop2} presented below to reject some inadmissible solutions.

\begin{lemma}\label{prop1}
Let $r>0$. Assume that $P_m$ is a polynomial of degree $m$ over $\mathbb{R}$ and define
\begin{equation}\label{eqpropo1}
f(\theta)=P_{m}(\theta)+\sum_{i=1}^{k}A_{i}\exp(\lambda_i\theta)+B\theta\exp(\gamma \theta),\;\;\;  \theta \in \Theta,
\end{equation}
where $\Theta$ contains some neighborhood of zero,  $\lambda_1<\lambda_2<\ldots<\lambda_k$ (and none of them is zero), and $\gamma$, $B$, $A_i$ are some real numbers. \\
If $f^r$ is a Laplace transform of a probability measure then  $P_m \equiv P_0 $ and $B=0$.
\end{lemma}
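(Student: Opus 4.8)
The plan is to exploit the growth asymmetry between polynomial, exponential and $\theta\exp(\gamma\theta)$ terms, together with the positivity/finiteness constraints forced by $f^r$ being a Laplace transform of a probability measure. A Laplace transform of a probability measure is finite at the origin (equal to $1$ at $\theta=0$ after normalization, but in any case finite), positive on the real interval where it converges, convex, and, most importantly, if it is finite on a neighborhood of $0$ then it is real-analytic there and the underlying measure has moments of all orders; moreover $L(\theta)\to\infty$ monotonically (or stays bounded) as one moves toward the boundary of the domain of finiteness, and $L$ cannot vanish on the interior of its domain. So first I would record these standard facts as the toolkit.

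The key steps, in order. First I would show $B=0$. Suppose $B\neq0$. Consider the behaviour of $f(\theta)$ as $\theta\to+\infty$ along the real axis (if $\Theta$ extends that far; otherwise as $\theta$ approaches the right endpoint — but since $f^r$ is a genuine Laplace transform with a neighborhood of $0$ inside its domain, I can push $\theta$ to the supremum of the domain). The dominant term among $P_m(\theta)$, $A_i\exp(\lambda_i\theta)$ and $B\theta\exp(\gamma\theta)$ is determined lexicographically by the pair (exponential rate, then polynomial factor): the term $B\theta\exp(\gamma\theta)$ beats $A_j\exp(\gamma\theta)$ whenever they share the rate $\gamma$, and beats every $A_i\exp(\lambda_i\theta)$ with $\lambda_i<\gamma$ and every polynomial term if $\gamma\ge0$. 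The obstruction to a one-line argument is the case $\gamma<0$ and $\gamma$ strictly less than all the $\lambda_i$ that actually appear, or $\gamma$ not equal to any $\lambda_i$: then $B\theta\exp(\gamma\theta)$ is dominated as $\theta\to+\infty$ and one must look instead at $\theta\to-\infty$. So I would argue on both ends. As $\theta\to-\infty$, every exponential $\exp(\lambda_i\theta)$ with $\lambda_i>0$ and $\exp(\gamma\theta)$ with $\gamma>0$ dies, while those with negative rate blow up; the genuine danger is that $f$ could change sign or blow up in a way incompatible with $f^r>0$, or that $f(\theta)\to0$ forcing $f^r\to0$ on the interior of the domain, which is impossible for a Laplace transform. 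More cleanly: I would use real-analyticity. The function $f^r$ extends analytically to a complex neighborhood, but $f$ itself, being a finite combination of $\theta^j$, $\exp(\lambda_i\theta)$ and $\theta\exp(\gamma\theta)$, is entire; so $f^r$ analytic on a neighborhood of $0$ forces (taking logarithmic derivatives, or using that $f(0)>0$) that the problem is really about the behaviour of $f$ at infinity versus the admissible growth of Laplace transforms. The rate of exponential growth of a Laplace transform as $\theta\to c^-$ (c the right endpoint) is governed by the support of the measure; if the support is unbounded above the Laplace transform blows up super-exponentially or the domain is a half-line, while a pure $B\theta e^{\gamma\theta}$ top term would make $\log f^r \sim r(\log\theta + \gamma\theta)$, i.e. growth $\gamma\theta + \log\theta$, which is exponential-of-rate-$\gamma$ times a polynomial — and I claim a Laplace transform $L(\theta)=\int e^{\theta x}\,d\mu(x)$ can have $\log L(\theta)\sim \gamma\theta + c\log\theta$ only if $c=0$ (an atom at $x=\gamma/r\cdot$... ): here is the crux, because $\log L$ is convex and $L'(\theta)/L(\theta)=\mathbb{E}_\theta[X]$ is increasing, and a term $\log\theta$ in $\log L$ would give $L'/L \sim \gamma + 1/\theta$, a decreasing correction, contradicting monotonicity of the mean. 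This convexity/monotone-mean argument is, I expect, the cleanest route and the one I would write out.

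Second, once $B=0$, I would show $P_m$ is constant, i.e. $\deg P_m\le 0$. Again: if $m\ge1$ and $\gamma,\lambda_i$ are all $\le0$ (so that no exponential dominates as $\theta\to+\infty$), then $f(\theta)\sim (\text{lead coeff})\,\theta^m$ as $\theta\to+\infty$; for $f^r>0$ we need the leading coefficient positive and then $f^r\sim c\,\theta^{mr}$, so $\log f^r\sim mr\log\theta$, again contradicting that $\log L$ has increasing derivative tending to the right endpoint of the support (here the mean $L'/L\sim mr/\theta\to0$, forcing the support to sit at $0$, i.e. $\mu=\delta_0$ and $f^r\equiv1$ — but $f$ is nonconstant, contradiction). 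If instead some exponential with positive rate is present it dominates the polynomial at $+\infty$, so there I would instead differentiate: $(\log f^r)''=r(\log f)''=r\big(f''f-(f')^2\big)/f^2$ must be $\ge0$ everywhere in the domain (variance is nonnegative), and I would extract the polynomial part by repeatedly differentiating or by examining an asymptotic expansion as $\theta\to-\infty$ where the positive-rate exponentials vanish, reducing again to the polynomial-plus-nonpositive-rate-exponentials situation already handled. So the structure is: an asymptotic/dominant-term analysis at whichever end of the domain kills the undesired term, combined with the convexity of $\log L$ and the monotonicity of the mean $L'/L$ of a Laplace transform.

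The main obstacle, and the place I would spend the most care, is organising the case distinction on the signs of $\gamma$ and of the $\lambda_i$ so that in every case some end of the domain (or some iterated derivative) isolates the $B\theta e^{\gamma\theta}$ term or the leading polynomial term as the strictly dominant contribution, and then cashing that in against the shape constraints of a Laplace transform (positivity, no interior zeros, $\log L$ convex with nondecreasing derivative). A subtlety is that $\Theta$ need only contain a neighborhood of $0$, so I cannot assume the domain is all of $\mathbb{R}$; I must phrase the asymptotics as "approach the endpoint of the maximal interval of finiteness," using that a Laplace transform is finite and analytic up to — but blowing up at — the boundary, and that the entire function $f$ has well-understood behaviour there regardless. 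I would present the $B=0$ step first in full and then remark that the $P_m\equiv P_0$ step is identical once $B=0$, with the polynomial playing the role the $\theta e^{\gamma\theta}$ term played, thereby avoiding repetition.
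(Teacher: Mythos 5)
There is a genuine gap, and the missing idea is precisely the one the paper's proof is built on: continuation to the imaginary axis. Since $f^r$ is finite on a real neighbourhood of $0$, the measure has a finite exponential moment and its Laplace transform extends analytically to a vertical strip; on the imaginary axis it is the characteristic function, hence $|f(it)|^{r}=|\phi(t)|\le 1$ for \emph{all} real $t$. There every $\exp(i\lambda_j t)$ has modulus $1$, while $|P_m(it)|\sim c|t|^m$ and $|Bit\,e^{i\gamma t}|=|B||t|$ grow without bound, so boundedness forces $P_m\equiv P_0$ and $B=0$ in three lines. Your proposal never uses this global boundedness and tries instead to run asymptotics along the real axis, and that programme does not close.

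Two concrete failure points. First, the hypothesis only gives that $\Theta$ contains a neighbourhood of $0$; the maximal interval on which $L_\mu$ is finite may be bounded (and a Laplace transform need \emph{not} blow up at the endpoints of its interval of finiteness, contrary to what you assert --- consider a density like $e^{-|x|}(1+x^2)^{-2}$, whose transform is finite at $\theta=\pm1$ and infinite beyond). The fact that $f$ is entire does not let you push $\theta\to\pm\infty$ inside the domain of $L_\mu$, and on a bounded interval all of $P_m(\theta)$, $e^{\lambda_i\theta}$, $\theta e^{\gamma\theta}$ are just comparable finite quantities, so there is no ``dominant term'' to cash in against convexity of $\log L$ or monotonicity of $L'/L$; these are necessary but very weak local conditions and do not exclude, say, $\bigl((1+\theta^2+e^{\theta})/2\bigr)^r$ on a small interval. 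Second, even granting an unbounded domain, when $\gamma$ lies strictly between $\min_i\lambda_i$ and $\max_i\lambda_i$ the term $B\theta e^{\gamma\theta}$ is exponentially subdominant at \emph{both} real ends, so no case split on signs, endpoint analysis, or iterated differentiation of the kind you sketch isolates it; your proposal acknowledges this case but offers no working mechanism for it. The imaginary axis resolves both problems at once, which is why the paper's argument is the right one here.
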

\begin{proof}
Since $f$ is defined in a neighborhood of zero, we consider a characteristic function $\phi(t)=f^r(it)$. We have
\begin{displaymath}
|\phi(t)|^2=\left|P_{m}(it)+Bit\exp(\gamma it)+\sum_{i=1}^{k}A_i\exp(\lambda_i it)\right|^{2r}.
\end{displaymath}
Function $|\phi|$ is bounded on $\mathbb{R}$ as the absolute value of a characteristic function of a probability measure. Therefore $P_{m}\equiv P_{0}$ and $B=0$.
\end{proof}
\begin{lemma}\label{prop2}
Let $r>0$ and
\begin{multline*}
f(\theta)=\sum_{j=0}^{1}e^{\lambda_j\theta}\left[(A_{0j}+\theta B_{0j})\cos(\gamma_j\theta)+  (A_{1j}+\theta B_{1j})\sin(\gamma_j\theta)\right]\\+\sum_{j=2}^{3}A_{j}\exp(\lambda_j\theta),\;\;\;\theta\in\Theta,
\end{multline*}
where $\Theta$ contains some neighborhood of zero, be a function with all the parameters being some real numbers. Furthermore, assume that $\lambda_{0}+i\gamma_0 \ne \lambda_1+i\gamma_1$. Then if
$f^r$ is a Laplace transform of a probability measure, then  $A_{0j}=A_{1j}=B_{0j}=B_{1j}=0$, $j=0,1$.
\end{lemma}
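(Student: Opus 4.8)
The plan is to mimic the proof of Lemma \ref{prop1}, again working with the characteristic function $\phi(t)=f^r(it)$ and exploiting the boundedness of $|\phi|$ on $\mathbb{R}$. Write $z_j=\lambda_j+i\gamma_j$ for $j=0,1$, so that the two bracketed summands in $f$ are, after passing to $\theta=it$, of the form $(P_{0j}(t)+tQ_{0j}(t))e^{z_j it}$ with $P,Q$ affine in the real and imaginary parts of the coefficients; more precisely, $e^{\lambda_j\theta}\cos(\gamma_j\theta)$ and $e^{\lambda_j\theta}\sin(\gamma_j\theta)$ evaluated at $\theta=it$ become linear combinations of $e^{\pm i(\ldots)t}e^{\mp(\ldots)t}$, i.e.\ of $e^{\overline{z_j}\,it}$ and $e^{z_j it}$ up to complex scalars. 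The upshot is that
\[
f(it)=\sum_{w\in S} R_w(t)\,e^{w i t},
\]
where $S$ is the finite set of exponents $\{z_0,\overline{z_0},z_1,\overline{z_1},i\lambda_2,i\lambda_3\}$ (some possibly coinciding), and $R_w(t)$ is a polynomial in $t$ of degree at most $1$, whose linear coefficient is a nonzero multiple of the pair $(B_{0j},B_{1j})$ when $w=z_j$ or $w=\overline{z_j}$.

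The key step is a growth/oscillation argument showing that if any $R_w$ has positive degree, then $|f(it)|$ — hence $|\phi(t)|=|f(it)|^r$ — is unbounded. First I would isolate the exponents with the largest imaginary part of $w$ (equivalently, with $e^{wit}$ of largest modulus as $t\to+\infty$ along a suitable direction); among those, a standard almost-periodicity argument (choosing $t_n\to\infty$ so that all the relevant phases $e^{w i t_n}$ are simultaneously close to prescribed unit complex numbers, via simultaneous Diophantine approximation / Kronecker's theorem applied to the distinct imaginary parts, and all other exponentials are exponentially negligible) lets one arrange that the dominant terms do not cancel, so $|f(it_n)|$ grows at least linearly in $t_n$ whenever the top-degree block contains a genuine $t$-term. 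This forces $B_{0j}=B_{1j}=0$ for $j=0,1$. Once the $B$'s vanish, $f(it)=\sum_{w\in S}c_w e^{wit}$ is a bounded (indeed, its $r$-th power bounded) exponential sum; now if some $c_w\ne0$ with $w\ne i\lambda_2,i\lambda_3$ (i.e.\ with $\mathrm{Im}\,w\ne0$... wait, here all $w$ have the form $\lambda+i\gamma$, so I instead track $\mathrm{Re}(wit)=-\gamma_j t$, and the relevant dominance is in the real exponential rate). Among the exponents $z_0,\overline{z_0},z_1,\overline{z_1}$, at least one has $\gamma_j\ne0$ unless the trigonometric part already degenerates; if $\gamma_0\ne0$ then $\overline{z_0}\ne z_0$, and the corresponding exponentials $e^{(\lambda_0\pm i\gamma_0)it}$ have modulus $e^{\mp\gamma_0 t}$, one of which blows up as $t\to\pm\infty$, contradicting boundedness unless $A_{0j}=A_{1j}=0$. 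The hypothesis $z_0\ne z_1$ guarantees the two blocks are genuinely different exponents so their coefficients cannot be conflated; the case $\gamma_j=0$ for some $j$ reduces $f$ to the shape already covered by Lemma \ref{prop1} (a polynomial-times-exponential term $\theta B_{\cdot j}e^{\lambda_j\theta}$ plus pure exponentials), whose conclusion again kills the offending coefficients.

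I expect the main obstacle to be making the non-cancellation of dominant terms rigorous when several exponents share the same real exponential rate $\gamma$ but different imaginary parts (oscillation rates) $\lambda$: one must choose the approximating sequence $t_n$ so that the bounded polynomial factors and phases align to produce a lower bound, rather than an accidental cancellation. The cleanest route is to invoke the theory of Bohr almost-periodic functions: $g(t):=e^{-\sigma t}f(it)$, where $\sigma=\max_j(-\gamma_j)$ is the top rate, is (after discarding exponentially small terms) a trigonometric polynomial plus a term linear in $t$ times a trigonometric polynomial; its Bohr mean of $|g(t)|^2$ over long intervals picks out $\sum|c_w|^2$ for the top-rate exponents and, if any linear coefficient survives, grows like $t^2$, contradicting $|f(it)|\le C$. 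Carrying this out is routine but is the step requiring genuine care; everything else is bookkeeping translating the $(\cos,\sin)$ parametrization into complex exponentials and peeling off rates one at a time. Finally, once all of $A_{0j},A_{1j},B_{0j},B_{1j}$ are shown to vanish for $j=0,1$, the proof is complete.
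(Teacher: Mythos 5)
Your proposal is correct and takes essentially the same route as the paper: pass to the characteristic function $\phi(t)=f^r(it)$, note that the trigonometric factors become hyperbolic (so $|\phi(t)|$ grows exponentially in $|t|$, or linearly from the $tB_{\cdot j}$ terms, unless the offending coefficients vanish), and invoke boundedness of the characteristic function of a probability measure. The paper compresses this into a two-line computation of $|\phi(t)|^2$; the additional care you take over non-cancellation among exponents sharing the same growth rate (via Kronecker/Bohr means) goes beyond what the paper writes down but is the same argument made rigorous.
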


\begin{proof}
Let $\phi(t)=f^r(it)$ be a characteristic function corresponding to
  $f^r$, then
\begin{multline*}
|\phi(t)|^{2}=\\\bigg|\sum_{j=0}^{1}\left(e^{\lambda_jit}(A_{0j}+itB
_{0j})\frac{e^{\gamma_{j}t}+e^{-\gamma_jt}}{2} +  e^{\lambda_jit}(A_{1j}+itB_{1j})\frac{e^{-\gamma_{j}t}-e^{\gamma_jt}}{2i}\right) \\
+A_{2}\exp(i\lambda_2t)+A_3\exp(i\lambda_3t)\bigg|^{2r}.
\end{multline*}
Since $|\phi|$ is bounded on $\mathbb{R}$, we arrive at the conclusion.
\end{proof}
\begin{proof}[Proof of Proposition \ref{lemat}]
We shall separately consider all situations regarding the roots of \eqref{lameq}.
\subsubsection{Four single real roots of \eqref{lameq}}
Let $\lambda_1,\lambda_2,\lambda_3,\lambda_4$ be the roots of \eqref{lameq}. Then $R$ is of the form
\begin{equation}\label{fdrr}
R(\theta_1,\theta_2)=\sum_{i=1}^4A_i(\theta_2)\exp(\lambda_i\theta_1),
\end{equation}
where $A_{i}(\cdot)$, $i=0,\ldots,3$ are some real functions. Plugging  \eqref{fdrr} into \eqref{r2} we obtain
\begin{equation*}
\lambda_i^2A_{i}(\theta_2)-a\lambda_iA_i(\theta_2)-bA_{i}^{'}(\theta_2)-eAA_i(\theta_2)=0,\;\; i=1,2,3,4.
\end{equation*}
Therefore the explicit formulas for $A_{i}(\cdot)$'s are
\begin{equation*}
A_i(\theta_2)=A_i\exp\left(\frac{\lambda_i^2-a\lambda_i-eA}{b}\theta_2\right),\;i=1,2,3,4,
\end{equation*}
and $A_i$, $i=1,2,3,4$ are some real constants. Hence
\begin{equation}
R(\theta_1,\theta_2)=\sum_{i=1}^4A_i\exp\left(\lambda_i\theta_1+
\frac{\lambda_i^2-a\lambda_i-eA}{b}\theta_2\right).
\end{equation}
\subsubsection{One double and two single real roots of \eqref{lameq}}\label{odts}
Let  $\lambda_1, \lambda_2$ be single roots and $\lambda_3$ a double root of \eqref{lameq}. Then $R$ is of the form
\begin{equation}\label{drtd}
R(\theta_1,\theta_2)=\sum_{i=1}^2A_i(\theta_2)\exp(\lambda_i\theta_1)+\left(A_3(\theta_2)+A_4(\theta_2)\theta_1\right)\exp(\lambda_3\theta_1),
\end{equation}
where $A_i(\cdot)$, $i=1,2,3,4$, are  some real functions. Since $R^{-1/A}$ is a Laplace transform of a probability measure, using Lemma \ref{prop1} we conclude that $A_4 \equiv 0$. Plugging \eqref{drtd} into \eqref{r1} we obtain
\begin{equation*}
\lambda_i^2A_i(\theta_2)-a\lambda_iA_i(\theta)-bA_i^{'}(\theta_2)-eAA_i(\theta_2) =0,\;i=1,2,3.
\end{equation*}
These yield
\begin{eqnarray*}
A_i(\theta_2)=A_i\exp\left(\frac{\lambda_i^2-a\lambda_i-eA}{b}\theta_2\right),\;i=1,2,3,
\end{eqnarray*}
and
\begin{equation*}
R(\theta_1,\theta_2)=\sum_{i=1}^{3}A_i\exp\left(\lambda_i\theta_1+
\frac{\lambda_i^2-a\lambda_i-eA}{b}\theta_2\right).
\end{equation*}
\subsubsection{One single and one triple real roots of \eqref{lameq}}
Let $\lambda_1$ be a single and $\lambda_2$ a triple real root of \eqref{lameq}. Then
\begin{equation*}
R(\theta_1,\theta_2)=A_1(\theta_2)\exp\left(\lambda_1\theta_1\right)+ \left(A_2(\theta_2)+A_{3}(\theta_2)\theta_1+A_{4}(\theta_2)\theta_1^2\right)\exp(\lambda_2\theta_1),
\end{equation*}
where $A_i(\cdot)$, $i=1,2,3,4$, are  some real functions. Analogously as in the previous subsection, using Lemma \ref{prop1} (for $R(\theta_1,\theta_2)\exp(-\lambda_2\theta_1)$), we conclude that
$A_3\equiv A_4 \equiv 0$. Function $R$ in such case is of the form
\begin{equation*}
R(\theta_1,\theta_2)=\sum_{i=1}^{2}A_i\exp\left(\lambda_i\theta_1+\frac{\lambda_i^2-a\lambda_i-eA}{b}\theta_2\right),
\end{equation*}
where $A_i$, $i=1,2$ are some real functions.
\subsubsection{Quadruple real root  of \eqref{lameq}}
Let $\lambda_1$ be a quadruple real root of \eqref{lameq}. Then
\begin{equation*}
R(\theta_1,\theta_2)=A_1\exp\left(\lambda_1\theta_1+\frac{\lambda_1^2-a\lambda_1-eA}{b}\theta_2\right),
\end{equation*}
where $A_1$ is a real constant.
\subsubsection{Two double real roots of \eqref{lameq}}
Let $\lambda_1$ and $\lambda_2$ be two  distinct double roots of \eqref{lameq}. Steps analogous to the ones taken in Section \ref{odts}, with the help of Lemma \ref{prop1}, yield
\begin{equation*}
R(\theta_1,\theta_2)=\sum_{i=1}^2A_i\exp\left(\lambda_i\theta_1+\frac{\lambda_i^2-a\lambda_i-eA}{b}\theta_2\right),
\end{equation*}
where $A_i$, $i=1,2$, are some real constants.
\subsubsection{Two single real and two single complex roots of \eqref{lameq}}
Let $\lambda_1$, $\lambda_2$ be two real  and $\lambda_3 +i\gamma_3$, $\lambda_3-i\gamma_3$ be two complex roots of \eqref{lameq}.
Then $R$ takes the form
\begin{multline*}
R(\theta_1,\theta_2)=\sum_{i=1}^2A_i(\theta_2)\exp\left(\lambda_i\theta_1+\frac{\lambda_i^2-a\lambda_i-eA}{b}\theta_2\right) + \\
\exp(\lambda_3\theta_1)\left(A_{3}(\theta_2)\cos(\gamma_3\theta_1)+A_4(\theta_2)\sin(\gamma_3\theta_1)\right).
\end{multline*}
From Lemma \ref{prop2} we conclude that $A_3 \equiv A_4 \equiv 0$, hence
\begin{equation*}
R(\theta_1,\theta_2)=\sum_{i=1}^{2}A_{i}\exp\left(\lambda_i\theta_1+\frac{\lambda_i^2-a\lambda_i-eA}{b}\theta_2\right),
\end{equation*}
where $A_i$, $i=2,3$ are some real constants.
\subsubsection{Four distinct complex roots of \eqref{lameq}}
Let $\lambda_1+i\gamma_1$, $\lambda_1-i\gamma_1$, $\lambda_2+i\gamma_2$ and $\lambda_2-i\gamma_2$ be the roots of \eqref{lameq}. Then $R$ takes the form
\begin{multline*}
R(\theta_1,\theta_2)= \exp(\lambda_1\theta_1)\left(A_{1}(\theta_2)\cos(\gamma_1\theta_1)+A_{2}(\theta_2)\sin(\gamma_1\theta_1)\right) +\\
\exp(\lambda_2\theta_1)\left(A_3(\theta_2)\cos(\gamma_2\theta_1)+A_4(\theta_2)\sin(\gamma_2\theta_1)\right).
\end{multline*}
Using Lemma \ref{prop2} we conclude that there does not exist probability measure  with the Laplace transform $L=R^{-1/A}$.
\subsubsection{Two double complex roots of \eqref{lameq}}
Analogously to the preceding case, there is no probability measure corresponding to $R$
\begin{multline*}
R(\theta_1,\theta_2)=\exp(\lambda_1\theta_1)\bigg[\cos(\gamma_1\theta_1)\big(A_{1}(\theta_2)+\theta_1A_{2}(\theta_2)\big) +  \\ \sin(\gamma_1\theta_1)\big(A_{3}(\theta_2)+\theta_1A_{4}(\theta_2)\big) \bigg],
\end{multline*}
where $\lambda_1+i\gamma_1$ and $\lambda_1-i\gamma_1$ are double complex roots of \eqref{lameq} and $A_{i}$, $i=1,2,3,4$, are some real functions.
\subsubsection{Double real root and two single complex roots of \eqref{lameq}}
Let $\lambda_1$, $\lambda_2+i\gamma_2$ and $\lambda_2-i\gamma_2$ be the roots of \eqref{lameq}, then
\begin{multline*}
R(\theta_1,\theta_2)=\exp(\lambda_1\theta_1)\left(A_{1}(\theta_2)+\theta_1A_{2}(\theta_2)\right) \\+ \exp(\lambda_2\theta_1)\left(A_{3}(\theta_2)\cos(\gamma_2\theta_1)+A_4(\theta_2)\sin(\gamma_2\theta_1) \right).
\end{multline*}
From Lemma \ref{prop1} and Lemma \ref{prop2} we obtain $A_{2}\equiv A_3 \equiv A_4 \equiv 0$.  Hence $R$ takes the form
\begin{equation*}
R(\theta_1,\theta_2)=\exp\left(\lambda_1\theta_1+\frac{\lambda_1^2-a\lambda_1-eA}{b}\theta_2\right).
\end{equation*}
The conclusion of Proposition \ref{lemat} follows by a straightforward aggregation of the  above points.
\end{proof}

\end{appendix}

\end{document}